\documentclass[12pt]{article}
\usepackage{amsmath}
\usepackage{amssymb}
\usepackage{amsthm}
\newtheorem{lemma}{Lemma}
\newtheorem{conjecture}{Conjecture}
\title{Adjoining edges to $G\mathbin{\square}H$ to construct a \\
  minimal dominating set of size $\gamma(G)\gamma(H)$}
\author{Allan van Hulst \\
\small\tt allanvanhulst@protonmail.com}
\begin{document}
\maketitle
\begin{abstract}
\noindent For graphs $G,H$ it is possible to add
$(|V(G)|-\gamma(G))(|V(H)|-\gamma(H))$ edges to the Cartesian
product $G\mathbin{\square}H$ such that a minimal dominating
set $D$ of size $\gamma(G)\gamma(H)$ emerges. We hypothesize
that $D$ is also a minimum dominating set for the resulting
graph and show that this implies Vizing's conjecture. 
\end{abstract}
All graphs $G=(V(G),E(G))$ in this note are assumed to be simple
and without isolated vertices. A set $D\subseteq V(G)$ dominates $G$ 
if for all $v\in V(G)-D$ there exists a $u\in D$ such that $u\sim v$. 
The expression $\gamma(G)$ denotes the minimum cardinality of such a 
$D$. A dominating set $D$ is minimal if it does not have a strict subset 
that dominates $G$, which may include cases where $|D|>\gamma(G)$. The 
Cartesian product $G\mathbin{\square}H$ is a graph having $V(G)\times 
V(H)$ as its set of vertices. Adjacency in $G\mathbin{\square}H$ is 
defined as $(u,v)\sim(u',v')$ if and only if either (a) $u=u'$ and 
$v\sim v'$ in $H$ or (b) $v=v'$ and $u\sim u'$ in $G$. These 
definitions are standard in the literature on graphs and dominating 
sets \cite{cockayne,cockayne2}.

Vizing\footnote{I would like to thank Anna Kathekina for translating
parts of Vizing's original work for me.} postulated the following 
inequality over all $G$ and $H$ as a conjecture \cite{vizing,vizing2}:
\begin{equation}
\label{eqn:vizing}
\gamma(G)\gamma(H)\leq\gamma(G\mathbin{\square}H)
\end{equation}
which is a sharp bound if (\ref{eqn:vizing}) holds true. For more 
background information on Vizing's conjecture, survey articles are
available \cite{bresar,hartnell}. The objective of this note is to 
study a conceptual reversal of the inequality in (\ref{eqn:vizing}): it 
is shown that a number of edge-additions to $G\mathbin{\square}H$ creates
a minimal dominating set $D$ of size $\gamma(G)\gamma(H)$. If $D$ can 
also be shown to be a minimum dominating set for the edge-adjoined graph, 
Vizing's conjecture follows directly. For clarity, this conditional
implication refers to inequality $(\ref{eqn:vizing})$ and not to some
alternative form of Vizing's conjecture on these edge-adjoined graphs.

We now establish some further groundwork. The open neighborhood of a 
vertex $u\in V(G)$ is defined as $N(u)=\{v\in V(G)\mid u\sim v\}$.
A vertex $v\in V(G)-D$ is said to be a private neighbor of $u\in D$
if $u\sim v$ and $N(v)\cap D=\{u\}$. The set of private neighbors
of a $u\in D$ is denoted by $P(u)$. A dominating set $D\subseteq 
V(G)$ is said to be in canonical form if every $u\in D$ has at least 
one private neighbor in $G$. Canonical dominating sets of minimum size 
always exist, although these are not necessarily unique. 
\begin{lemma}
\label{lem:canonical}
[Adapted from \cite{bollobas}, page 7] For all graphs $G$ there exists 
a canonical dominating set $D\subseteq V(G)$ such that $|D|=\gamma(G)$.
\end{lemma}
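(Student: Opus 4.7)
\emph{Proof plan.} The plan is to choose, among all minimum dominating sets of $G$, one that maximizes $|E(G[D])|$, the number of edges in the induced subgraph on $D$; I claim such a $D$ is automatically canonical. This choice is well-defined because the collection of minimum dominating sets of a finite graph is finite.

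Assume for contradiction that this $D$ is not canonical, so some $u\in D$ has $P(u)=\emptyset$. Since every minimum dominating set is minimal, the standard characterization of minimality forces each vertex of $D$ to either have a private neighbor or satisfy $N(u)\cap D=\emptyset$; otherwise $D-\{u\}$ would still dominate $G$. Because $P(u)=\emptyset$, we must have $N(u)\cap D=\emptyset$. As $G$ has no isolated vertices, there is some $v\in N(u)$, and necessarily $v\in V(G)-D$. Since $v\notin P(u)$, the set $N(v)\cap D$ contains some $w\neq u$, hence $w\in D-\{u\}$.

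Now set $D'=(D-\{u\})\cup\{v\}$. I would verify that $D'$ dominates $G$: no vertex of $V(G)-D$ was dominated solely by $u$ (such a vertex would lie in $P(u)$), and $u$ itself is now dominated by $v\in D'$. Hence $|D'|=\gamma(G)$ and $D'$ is again a minimum dominating set. However, $u$ contributes no edges to $G[D]$ by isolation, whereas $v$ contributes at least the edge $vw$ to $G[D']$, so $|E(G[D'])|\geq |E(G[D])|+1$, contradicting the maximal choice of $D$.

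The main obstacle I anticipate is choosing a potential that strictly increases under the swap. Alternative candidates such as the number of canonical vertices are harder to handle, because replacing $u$ by $v$ can inadvertently demote a previously canonical vertex whose private neighbor happens to be adjacent to $v$. Using $|E(G[D])|$ sidesteps this difficulty: the non-canonical $u$ is necessarily isolated in $G[D]$ and therefore contributes nothing to the edge count, while any legitimate replacement $v$ is forced to attach to $D-\{u\}$ by at least one edge, so the swap uniformly strictly increases the potential.
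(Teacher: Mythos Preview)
Your proof is correct and follows essentially the same route as the paper: both pick a minimum dominating set maximizing $|E(G[D])|$, rule out the case $N(u)\cap D\neq\emptyset$ by minimality/minimumness, and in the remaining case swap $u$ for a neighbor $v\in V(G)-D$ to strictly increase the induced edge count. Your version is a bit more explicit in identifying the witness edge $vw$, but the argument is the same.
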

\begin{proof}
Pick a minimum dominating set $D$ such that the number of edges in
the subgraph induced by $D$ is maximal. Let $u\in D$ and assume
towards a contradiction that $P(u)=\emptyset$. We distinguish
between two cases: (a) if there exists a $u'\in D$ such that
$u\sim u'$ then $D-\{u\}$ is a smaller dominating set and, (b)
if $N(u)\cap D=\emptyset$ then there must exist at least one
$v\in V(G)-D$ such that $u\sim v$ which makes $(D-\{u\})\cup\{v\}$
a dominating set inducing a subgraph with more edges.
\end{proof}
A dominating set $D$ in canonical form provides an insightful way 
to construct a surjective function from the non-dominated vertices
onto $D$, thereby following edges.
\begin{lemma}
\label{lem:surjective}
For all graphs $G$ and canonical dominating sets $D\subseteq V(G)$
there exists a surjection $F:V(G)-D\longrightarrow D$ such that
$F(v)=u$ implies $u\sim v$.
\end{lemma}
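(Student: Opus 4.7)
The plan is to leverage the canonical property of $D$: every $u \in D$ is guaranteed to have at least one private neighbor. I would begin by selecting, for each $u \in D$, a representative private neighbor $p(u) \in P(u)$. The first step is to observe that the assignment $u \mapsto p(u)$ is injective: if $p(u) = p(u') = v$, then the definition of private neighbor forces $N(v) \cap D = \{u\}$ and simultaneously $N(v) \cap D = \{u'\}$, so $u = u'$.

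With this injection in hand, define $F : V(G) - D \longrightarrow D$ in two stages. On the image of $p$, set $F(p(u)) = u$; the adjacency requirement $u \sim p(u)$ holds since $p(u)$ is by definition a neighbor of $u$. For each remaining $v \in V(G) - D$ not of the form $p(u)$, invoke the fact that $D$ dominates $G$ to pick any neighbor of $v$ inside $D$, and let $F(v)$ be that vertex. The implication $F(v) = u \Rightarrow u \sim v$ then holds on both parts of the domain by construction.

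Surjectivity follows instantly: for every $u \in D$ the vertex $p(u) \in V(G) - D$ satisfies $F(p(u)) = u$. No minimality or counting argument is required, only the two structural features of $D$, namely domination (to extend $F$ to vertices outside $\mathrm{im}(p)$) and canonicity (to anchor $F$ surjectively on private neighbors).

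I do not anticipate a serious obstacle. The entire argument reduces to the single observation that distinct elements of a canonical dominating set cannot share a private neighbor, which is essentially a restatement of the definition. Canonicity is exactly what provides the pool of anchors needed for surjectivity; without it, one could construct $F$ satisfying the adjacency condition but not necessarily hitting every vertex of $D$.
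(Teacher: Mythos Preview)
Your proof is correct and follows essentially the same approach as the paper: anchor $F$ on private neighbors to secure surjectivity, then extend to the remaining vertices by picking an arbitrary neighbor in $D$. The only cosmetic difference is that the paper sends \emph{every} private neighbor of $u$ to $u$ rather than a single chosen representative $p(u)$, but this changes nothing of substance.
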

\begin{proof}
First, we create a surjective partial function by setting $F(v)=u$
for all $u\in D$ and $v\in P(u)$, Then, we can extend $F$ to a total
function by assigning any remaining vertices in $V(G)-D$ to an 
arbitrary neighbor in $D$. 
\end{proof}
We now move to the setting of the Cartesian product and extend the
surjective functions associated with the operands to a new function.
Let $D_G\subseteq V(G)$ and $D_H\subseteq V(H)$ be respective canonical 
dominating sets for graphs $G$ and $H$ derived by Lemma \ref{lem:canonical}. 
Additionally, let $F_G:V(G)-D_G \longrightarrow D_G$ and $F_H:V(H)-D_H\longrightarrow D_H$ 
be surjective functions as obtained from Lemma \ref{lem:surjective} and define
$S:V(G\mathbin{\square}H)-D_G\times D_H\longrightarrow D_G\times D_H$ as:
\begin{center}
\begin{math}
S(u,v)=\begin{cases}
       (F_G\,(u),v)      & \mathrm{if}\,\,u\in V(G)-D_G\,\,\mathrm{and}\,\,v\in D_H \\
       (u,F_H\,(v))      & \mathrm{if}\,\,u\in D_G\,\,\mathrm{and}\,\,v\in V(H)-D_H \\
       (F_G\,(u),F_H(v)) & \mathrm{if}\,\,u\in V(H)-D_G\,\,\mathrm{and}\,\,v\in V(H)-D_H. \\
       \end{cases}
\end{math}
\end{center}
Observe that $S$ is always total, univalent and surjective onto 
$D_G\times D_H$. We can now define an adjoint graph $A_{G,H}[F_G,F_H]$
as:
\begin{center}
\begin{math}
A_{G,H}[F_G,F_H]=(V(G\mathbin{\square}H),E(G\mathbin{\square}H)\cup E_S),
\end{math}
\end{center}
where $E_S$ is defined as:
\begin{center}
\begin{math}
E_S = \{\{(u,v),(u',v')\}\mid S(u,v)=(u',v'),\,(u,v)\in (V(G)-D_G)\times(V(H)-D_H)\}.
\end{math}
\end{center}
We may now prove an interesting property, notably:
\begin{lemma}
\label{lem:minimal}
$D_G\times D_H$ is a minimal dominating set for $A_{G,H}[F_G,F_H]$.
\end{lemma}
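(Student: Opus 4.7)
My plan is to verify the two requirements of a minimal dominating set separately: first that $D_G\times D_H$ dominates $A_{G,H}[F_G,F_H]$, and second that removing any one element breaks this property. The surjection $S$ is tailored for the first part, while the canonical form of $D_G$ and $D_H$ together with the asymmetric structure of $E_S$ will drive the second.

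For domination, I would pick an arbitrary $(u,v)\in V(G\mathbin{\square}H)-D_G\times D_H$ and show that $S(u,v)\in D_G\times D_H$ is adjacent to $(u,v)$ in $A_{G,H}[F_G,F_H]$. In the first two defining cases of $S$, the image differs from $(u,v)$ in only one coordinate by a $G$- or $H$-edge (using that $F_G$ and $F_H$ map each argument to an adjacent vertex by Lemma~\ref{lem:surjective}), so the adjacency is inherited from $G\mathbin{\square}H$. In the third case $(u,v)\in(V(G)-D_G)\times(V(H)-D_H)$, so by construction $\{(u,v),S(u,v)\}\in E_S$.

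For minimality, I would exhibit for each $(u,v)\in D_G\times D_H$ a private neighbor relative to $D_G\times D_H$ in $A_{G,H}[F_G,F_H]$. Since $D_G$ is canonical, $u$ has a private neighbor $p_G\in V(G)-D_G$ with $N(p_G)\cap D_G=\{u\}$. My candidate is $(p_G,v)$, which is adjacent to $(u,v)$ via the $G$-edge $p_G\sim u$. The $G\mathbin{\square}H$-neighbors of $(p_G,v)$ take the form $(u'',v)$ with $u''\sim p_G$ or $(p_G,v'')$ with $v\sim v''$. Of the first kind, only $(u,v)$ lies in $D_G\times D_H$ by the privacy of $p_G$; of the second kind, none lie there because $p_G\notin D_G$. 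It remains to rule out $E_S$-edges incident to $(p_G,v)$, and for this I would use that every edge of $E_S$ has one endpoint in $(V(G)-D_G)\times(V(H)-D_H)$ and the other in $D_G\times D_H$, whereas $(p_G,v)$ lies in neither region since $v\in D_H$ while $p_G\notin D_G$.

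The main obstacle, and the reason I would prefer $(p_G,v)$ to the more symmetric candidate $(p_G,p_H)$, is precisely the bookkeeping of which adjoined edges might land on the proposed private neighbor. Keeping one coordinate inside its dominating set sidesteps $E_S$ entirely and reduces the uniqueness check to adjacencies already present in $G\mathbin{\square}H$, where canonicity of $D_G$ closes the argument.
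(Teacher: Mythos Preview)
Your argument is correct. The domination step is identical to the paper's. For minimality the paper instead takes any vertex in $P(u)\times P(v)$ as a private neighbor of $(u,v)$: such a vertex $(p_G,p_H)$ has no $G\mathbin{\square}H$-neighbor in $D_G\times D_H$ at all (both coordinates lie outside their dominating sets), and its sole $E_S$-edge leads to $(F_G(p_G),F_H(p_H))=(u,v)$, which silently uses the specific construction of $F_G,F_H$ in the proof of Lemma~\ref{lem:surjective} where private neighbors are sent to their unique dominator. Your choice $(p_G,v)$ sits in the ``mixed'' region $(V(G)-D_G)\times D_H$ and is therefore incident to no $E_S$-edge whatsoever, so your argument works for \emph{any} surjections $F_G,F_H$ satisfying Lemma~\ref{lem:surjective}, not just the particular ones built there. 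The paper's route, by contrast, yields the slightly stronger observation that each $(u,v)\in D_G\times D_H$ has at least $|P(u)|\cdot|P(v)|$ private neighbors in $A_{G,H}[F_G,F_H]$.
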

\begin{proof}
First, we show that $D_G\times D_H$ is a dominating set. Let
$(u,v)\in V(G\mathbin{\square}H)-D_G\times D_H$. If $u\in D_G$ and $v\in
V(H)-D_H$ then there exists an edge $v\sim v'$ for some $v'\in D_H$ and
an edge $(u,v)\sim(u,v')$ in $G\mathbin{\square}H$. The case for $u\in
V(G)-D_G$ and $v\in D_H$ is symmetric. If $u\in V(G)-D_G$ and $v\in V(H)-D_H$
then there exists a $(u',v')\in D_G\times D_H$ such that $S(u,v)=(u',v')$.

Then, we must show that $D_G\times D_H$ is minimal. Assume that $D=D_G\times D_H-
\{(u,v)\}$ for some $(u,v)\in D_G\times D_H$ and observe that none of the vertices
in $P(u)\times P(v)$ can be dominated by $D$.
\end{proof}
This leads to a conjecture stated in a form similar to Lemma \ref{lem:minimal}:
\begin{conjecture}
\label{con:minimum}
$D_G\times D_H$ is a minimum dominating set for $A_{G,H}[F_G,F_H]$.
\end{conjecture}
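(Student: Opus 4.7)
The plan is to prove the matching lower bound $\gamma(A_{G,H}[F_G,F_H]) \geq |D_G||D_H|$ by injecting $D_G \times D_H$ into any dominating set $D^*$ of $A_{G,H}[F_G,F_H]$. By canonicalness each ``box'' $P(u) \times P(v)$ is nonempty, so for every $(u,v) \in D_G \times D_H$ I can fix a representative $(p_u, q_v) \in P(u) \times P(v)$. These $|D_G||D_H|$ representatives are pairwise distinct, since $P(u) \cap P(u') = \emptyset$ for $u \neq u'$ (a private neighbor has a unique dominator in $D_G$), and similarly for $H$.

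The decisive feature of private-neighbor representatives is that their closed neighborhood in $A_{G,H}$ is highly constrained. Because $p_u \in P(u)$ forces $F_G(p_u) = u$ and $q_v \in P(v)$ forces $F_H(q_v) = v$, the sole new edge emanating from $(p_u,q_v)$ lands exactly at $(u,v)$. Combined with the fact that $p_u$ has $u$ as its unique $D_G$-neighbor in $G$, the closed neighborhood of $(p_u,q_v)$ in $A_{G,H}$ reduces to the ``anchor'' set $\{(p_u,q_v),(u,q_v),(p_u,v),(u,v)\}$ together with lateral neighbors of the form $(p',q_v)$ with $p' \in V(G)-D_G$ and $p' \sim p_u$, or $(p_u,q')$ with $q' \in V(H)-D_H$ and $q' \sim q_v$. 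I would then build an auxiliary bipartite graph $B$ with parts $D_G \times D_H$ and $D^*$, joining $(u,v)$ to $w \in D^*$ whenever $w$ dominates $(p_u,q_v)$ in $A_{G,H}$. Each $(u,v)$ has at least one $B$-neighbor because $D^*$ dominates $A_{G,H}$; a matching saturating $D_G \times D_H$ supplies the desired injection, which by Hall's theorem is equivalent to verifying $|N_B(T)| \geq |T|$ for every $T \subseteq D_G \times D_H$.

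Verifying Hall's condition is the principal obstacle, and I expect it to be genuinely hard: since Conjecture \ref{con:minimum} implies Vizing's inequality, no proof strategy strictly lighter than a Vizing-type argument can succeed here. The natural tactic is to split each vertex's $B$-neighborhood into anchor-type and lateral-type neighbors and to control the two contributions separately. Anchor bookkeeping is tractable, because a single $w \in D^*$ can anchor at most a grid-bounded family of representatives (essentially those sharing a $G$- or $H$-coordinate with $w$), and a priority-based scheme can route a partial matching through anchors. The lateral contributions --- edges between private neighbors inside $G$ alone or inside $H$ alone --- are the real crux, as these are precisely the configurations that have historically obstructed Vizing's conjecture. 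One plausible attack is to show that any deficient set $T$ would force a dominating configuration inside $G$ or $H$ of size strictly less than $\gamma(G)$ or $\gamma(H)$, yielding a contradiction; executing this reduction would likely require a novel structural exploitation of the specific matching $E_S$ adjoined to $G\mathbin{\square}H$, perhaps combined with a fractional-domination relaxation applied to $A_{G,H}$.
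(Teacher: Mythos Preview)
The paper does not contain a proof of this statement: it is explicitly presented as a \emph{conjecture}, and the author immediately observes that proving it would settle Vizing's conjecture. So there is no ``paper's own proof'' to compare your attempt against.

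Your write-up is not a proof either, and you already know this. The setup is sound: for $(p_u,q_v)\in P(u)\times P(v)$ one indeed has $F_G(p_u)=u$, $F_H(q_v)=v$ by the construction in Lemma~\ref{lem:surjective}, the unique $E_S$-edge at $(p_u,q_v)$ goes to $(u,v)$, and the description of the closed neighbourhood of $(p_u,q_v)$ in $A_{G,H}[F_G,F_H]$ is accurate. Forming the bipartite graph $B$ and invoking Hall's theorem is a reasonable reformulation. But the entire content of the conjecture is precisely the verification of Hall's condition, and that is the step you leave as a ``principal obstacle'' with only heuristic suggestions (anchor bookkeeping, deficient-set reductions, fractional relaxations). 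None of these suggestions is carried out, and since a successful argument here would imply Vizing's inequality, the missing step is not a loose end but the whole theorem. As it stands, what you have written is a plausible research programme, not a proof.
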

It is clear that any dominating set $D\subseteq V(G\mathbin{\square}H)$
for $G\mathbin{\square}H$, in particular when $|D|=\gamma(G\mathbin{\square}H)$,
must also dominate $A_{G,H}[F_G,F_H]$ since $G\mathbin{\square}H$ is a subgraph 
of $A_{G,H}[F_G,F_H]$ on the same set of vertices. Therefore, it is
sufficient to prove Conjecture \ref{con:minimum} to resolve Vizing's
conjecture as the result $\gamma(G)\gamma(H)=|D_G\times D_H|\leq|D|=
\gamma(G\mathbin{\square}H)$ is direct.


\begin{thebibliography}{99}
\bibitem{bollobas} B. Bollob{\'a}s and E. Cockayne. Graph-theoretic 
  parameters concerning domination, independence, and irredundance.
  \emph{Journal of Graph Theory}, 3(3):241--249, 1979.
\bibitem{bresar} B. Bre{\v{s}}ar, P. Dorbec, W. Goddard, B. Hartnell, 
  M. Henning, S. Klav{\v{z}}ar, and D. Rall. Vizing's conjecture: 
  a survey and recent results. \emph{Journal of Graph Theory}, 69(1):46--76, 
  2012.
\bibitem{cockayne} E. Cockayne. Domination of undirected graphs --- a survey.
  \emph{Theory and Applications of Graphs}, 141--147, 1978.
\bibitem{cockayne2} E. Cockayne and S. Hedetniemi. Towards a theory of
  domination in graphs. \emph{Networks} 7(3):247--261, 1977.
\bibitem{hartnell} B. Hartnell and D. Rall. Domination in Cartesian
  products: Vizing's conjecture. \emph{Domination in graphs} in
  Monographs and Textbooks in Pure and Applied Mathematics,
  209:163--190, 2017.
\bibitem{vizing} V. Vizing. The Cartesian product of graphs
  (in Russian). \emph{Vychislitel'nye sistemy}, 9:30--43, 1963.
\bibitem{vizing2} V. Vizing. Some unsolved problems in graph theory
  (in Russian). \emph{Uspekhi Matematicheskikh Nauk}, 23(6):117--134, 1968.
\end{thebibliography}
\end{document}